\documentclass[11pt]{article}
\usepackage[margin=1in]{geometry}
\usepackage{graphicx,xcolor,cite,mathtools}
\usepackage{amssymb,amsmath,amsthm,mathrsfs,paralist,bm,esint,setspace}
\RequirePackage[colorlinks,citecolor=blue,urlcolor=blue]{hyperref}

\newcommand{\R}{{\mathbb{R}}}

\newcommand{\E}{\mathrm{E}}

\renewcommand{\d}{\mathrm{d}}

\newcommand{\e}{\mathrm{e}}

\input cyracc.def

\title{
Gaussian fluctuation  for spatial average of super-Brownian motion}
\author{Zenghu Li\footnote{Research supported  partially by the National Key R\&D Program of China (No. 2020YFA0712900) and the National Science Foundation of China (No. 11531001).
	} \,and  Fei Pu\footnote{Research supported partially by “the Fundamental Research Funds for the Central Universities”.}\\\mbox{} \\Beijing Normal University
	}
\date{\today}

\begin{document}
\newtheorem{stat}{Statement}[section]
\newtheorem{proposition}[stat]{Proposition}
\newtheorem*{prop}{Proposition}
\newtheorem{corollary}[stat]{Corollary}
\newtheorem{theorem}[stat]{Theorem}
\newtheorem{lemma}[stat]{Lemma}
\theoremstyle{definition}
\newtheorem{definition}[stat]{Definition}
\newtheorem*{cremark}{Remark}
\newtheorem{remark}[stat]{Remark}
\newtheorem*{OP}{Open Problem}
\newtheorem{example}[stat]{Example}
\newtheorem{nota}[stat]{Notation}

\numberwithin{equation}{section}
\maketitle

\begin{abstract}
Let $\{u(t\,, x)\}_{(t, x)\in \R_+\times \R}$ be the density of 
one-dimensional super-Brownian motion starting from Lebesgue measure. 
 Using the Laplace functional of super-Brownian motion, we prove that as $N\to \infty$,
 the normalized spatial integral $N^{-1/2}\int_0^{xN}[u(t\,, z)-1 ]\d z$ converges jointly in $(t, x)$ to Brownian sheet in distribution. 
 \end{abstract}

\bigskip

\noindent{\it \noindent MSC 2020 subject classification}. Primary: 35R60, 60F05; Secondary: 60J68.\\
\smallskip
\noindent{\it Keywords}: super-Brownian motion, stochastic heat equation, central limit theorem. \\
 \smallskip
\noindent{\it Abbreviated title}: CLT for SBM

\section{Introduction and main result}
Let $X_t(\d x)$ be the one-dimensional super-Brownian motion starting from Lebesgue measure. It has a density with respect to Lebesgue measure, that is, $X_t(\d x)= u(t\,, x)\d x$ 
where almost surely $(t, x) \mapsto u(t\,, x)$ is jointly continuous on $(0, \infty)\times \R$ and satisfies the following stochastic partial differential equation (see Konno and Shiga \cite{KoS88})
\begin{align}\label{eq:SBM}
{\partial_t} u(t\,,x)=\frac12 {\partial_x^2}u(t\,, x) + \sqrt{u(t\,,x)}\, \dot{W}(t\,,x),\, t>0, x\in \R
\end{align}
where $\dot{W}$ denotes the space-time white noise.  The solution to the above stochastic heat equation is understood in the weak sense, that is, for any $t>0$ and $f\in C^{\infty}_c(\R)$ (collection of smooth functions with compact support), almost surely, we have 
\begin{align*}
\int_\R f(x)u(t\,, x)\d x = \int_\R f(x)\d x + \int_0^t\int_\R f(x) \sqrt{u(s\,, x)} W(\d s\, \d x) +  \int_0^t\d s \int_\R \frac12 f''(x)u(s\,, x)\d x,
\end{align*}
where $W(\d s\, \d x)$ denotes the stochastic integral with respect to space-time white noise (see Walsh\cite{Walsh}).

The goal of this paper is to establish the following central limit theorem for the solution to \eqref{eq:SBM}.
\begin{theorem}\label{th:main}
          Let $\{u(t\,, x)\}_{(t, x)\in \R_+\times \R}$ be the solution to  \eqref{eq:SBM} with $u(0\,, \cdot)\equiv1$. Then as $N\to \infty$,
          \begin{align*}
          \left\{\frac{1}{\sqrt{N}}\int_0^{xN}[u(t\,, z)- 1]\, \d z\right\}_{(t, x)\in [0, 1]^2} \xrightarrow{ C([0, 1]^2)}
          \left\{W(t\,, x):\, (t, x)\in [0, 1]^2
          \right\},
         \end{align*}
          where $W$ denotes the Brownian sheet and ``$\xrightarrow{C([0,1]^2)}$'' denotes the convergence in distribution in the space of continuous functions $C([0,1]^2)$.       
\end{theorem}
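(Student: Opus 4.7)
The plan is to prove weak convergence in $C([0,1]^2)$ via the standard two-step procedure: convergence of finite-dimensional distributions to those of the Brownian sheet, plus tightness. Write $F_N(t,x):=N^{-1/2}[X_t(\mathbf{1}_{[0,xN]})-xN]$ and observe that, because the initial measure is Lebesgue, $\E[X_t(\mathbf{1}_{[0,xN]})]=xN$, so $F_N$ is centered. The Walsh-type stochastic integral representation of \eqref{eq:SBM} gives
\begin{equation*}
F_N(t,x)=\frac{1}{\sqrt{N}}\int_0^t\!\int_{\R}(P_{t-s}\mathbf{1}_{[0,xN]})(z)\sqrt{u(s,z)}\,W(\d s,\d z),
\end{equation*}
where $P$ denotes the heat semigroup generated by $\tfrac12\partial_x^2$. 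Using $\E[u(s,z)]=1$, a direct second-moment calculation already yields $\E[F_N(t,x)F_N(t',x')]\to(t\wedge t')(x\wedge x')$ as $N\to\infty$, so the Brownian-sheet covariance is reproduced at the level of second moments; what remains is to show asymptotic Gaussianity and tightness.

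For the fidi convergence I would exploit the multi-time Laplace functional of super-Brownian motion: writing $V_s\eta$ for the time-$s$ value of $\partial_sv=\tfrac12\partial_x^2v-\tfrac12v^2$ with $v(0,\cdot)=\eta\geq0$, the Markov/branching property gives, for $\phi_k\geq0$ and $0<t_1\leq\cdots\leq t_n$,
\begin{equation*}
\E\Bigl[\exp\Bigl(-\sum_{k=1}^n\<\phi_k,X_{t_k}\>\Bigr)\Bigr]=\exp\Bigl(-\int_{\R}V_{t_1}\psi_1(y)\,\d y\Bigr),
\end{equation*}
with the backward cascade $\psi_n=\phi_n$, $\psi_k=\phi_k+V_{t_{k+1}-t_k}\psi_{k+1}$. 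I would apply this with $\phi_k=\lambda_k\mathbf{1}_{[0,x_kN]}/\sqrt{N}$ for arbitrary $\lambda_k\geq0$, and expand $V_s\eta=P_s\eta-\tfrac12\int_0^sP_{s-r}(V_r\eta)^2\,\d r$ iteratively in powers of $\epsilon:=N^{-1/2}$. The first-order term of $\int V_{t_1}\psi_1\,\d y$ equals $\sqrt{N}\sum_k\lambda_kx_k$, cancelling the centering factor in $\E[\exp(-\sum_k\lambda_kF_N(t_k,x_k))]$; the second-order term, produced jointly by the $-\tfrac12v^2$ nonlinearity inside each slice and by the cross-time iteration, evaluates to $\tfrac12\sum_{k,l}\lambda_k\lambda_l(t_k\wedge t_l)(x_k\wedge x_l)+o(1)$; and the $p$-th order correction is expected to be $O(N^{1-p/2})$ for $p\geq3$, reflecting the fact that every cumulant of $X_t(\mathbf{1}_{[0,L]})$ scales only linearly in $L$. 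Since a Gaussian law is uniquely determined by its Laplace transform on any neighborhood of the origin in the nonnegative orthant, matching with the Laplace transform of the Brownian sheet identifies all finite-dimensional marginals.

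For tightness in $C([0,1]^2)$ I would verify the two-parameter Kolmogorov--Chentsov criterion, namely a bound $\E[|F_N(t,x)-F_N(t',x')|^{2p}]\leq C_p(|t-t'|+|x-x'|)^p$ uniformly in $N$, for some integer $p>2$. Such a bound should follow by combining the Walsh representation with the Burkholder--Davis--Gundy inequality and higher moment bounds on $u(s,z)$ (themselves obtainable from the same Laplace functional). The main technical hurdle I foresee is the uniform-in-$N$ control of the nonlinear PDE expansion in the fidi step: the initial data $\lambda_k\mathbf{1}_{[0,x_kN]}/\sqrt{N}$ is small in sup norm ($O(N^{-1/2})$) but has $L^1$-mass of order $\sqrt{N}$, so naive Duhamel bounds blow up. One must exploit that $P_s\mathbf{1}_{[0,xN]}$ remains essentially an indicator of width $xN$ with boundary layer only $O(\sqrt{s})$, giving $\|P_s\mathbf{1}_{[0,xN]}\|_{L^p}^p\asymp xN$ for every $p\geq1$, so that the $p$-th-order term in $\int V_{t_1}\psi_1\,\d y$ is of order $N\cdot N^{-p/2}$ rather than the a priori $N^{1/2}$. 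Making this clean scaling precise, and bookkeeping the cross-time contributions in the backward cascade, is the core calculation.
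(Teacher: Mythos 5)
Your proposal follows essentially the same route as the paper: finite-dimensional convergence via the multi-time Laplace functional and the iterated Duhamel expansion of $V_t$ (exploiting precisely the sup-norm/$L^1$-mass scaling that the paper encodes in \eqref{bound1}--\eqref{bound2} and Lemma \ref{Plancherel}), together with tightness from uniform-in-$N$ moment bounds on increments obtained through the Walsh representation, Burkholder's inequality and the second-moment formula. The only cosmetic deviation is in the tightness criterion --- you invoke the two-parameter Kolmogorov--Chentsov bound on plain increments where the paper uses the Bickel--Wichura criterion on rectangular increments (Propositions \ref{prop:holder} and \ref{margin:holder}); your variant also works, since the marginal bounds of Proposition \ref{margin:holder} with $k>4$ already yield $\E\left[|V_N(t,x)-V_N(t',x')|^k\right]\lesssim\left(|t-t'|+|x-x'|\right)^{k/2}$ with exponent exceeding the parameter dimension, though (as in the paper's final section) you should still record that passing from Laplace-transform convergence on the nonnegative orthant to weak convergence of the marginals requires tightness of the finite-dimensional vectors and an analytic-continuation step.
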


Theorem \ref{th:main} is motivated by the recent progress on the central limit theorem for stochastic partial differential equations; see \cite{HNV2018, CKNP2, CKNP_c} and references therein.  For example, let $\{U(t\,, x)\}_{(t, x)\in \R_+\times \R}$
be the solution to parabolic Anderson model  subject to $U(0)\equiv 1$, driven by space-time white noise. 
Then according to Huang et al \cite[Theorem 1.2]{HNV2018},  for fixed $x\in[0, 1]$, as $N\to\infty$,
          \begin{align*}
         \left\{ \frac{1}{\sqrt{N}}\int_0^{xN}[U(t\,, z) -1] \, \d z\right\}_{t\in [0, 1]}	\xrightarrow{C[0,1]} \left\{\sqrt{x}\int_0^{t}\sqrt{\E[U(s\,, 0)^2]}\, \d {\rm B}_s\right\}_{t\in [0, 1]},
          \end{align*}
          where ${\rm B}$ denotes the standard Brownian motion and ``$\xrightarrow{C[0,1]}$'' denotes the convergence in law in the space of continuous functions $C[0,1]$.  On the other hand,  Chen et al \cite[Theorem 2.3]{CKNP2} have proved that for fixed $t\in [0, 1]$, as $N\to\infty$,
                    \begin{align*}
          \left\{\frac{1}{\sqrt{N}}\int_0^{xN}[U(t\,, z) -1] \, \d z\right\}_{x\in [0, 1]}	\xrightarrow{C[0,1]}
          \{C_t{\rm B}_x: x\in [0, 1]\},
          \end{align*}
          where $C_t^2=\int_0^t \E[U(s\,, 0)^2]\d s$.
          One might expect that as a process in $(t, x)$, the above normalized integral converges jointly to a two-parameter Gaussian process in distribution in the space $C([0, 1]^2)$ as $N\to \infty$; see \cite[Remark 2.5(2)]{CKNP2}. Our Theorem \ref{th:main} provides such a result for the solution to stochastic heat equation \eqref{eq:SBM}.

The SPDEs considered in \cite{HNV2018, CKNP2, CKNP_c} have Lipschitz continuous diffusion coefficient, where we can perform Malliavin calculus with the solution and apply the Poincar\'{e} inequality to obtain the CLT. 
For the stochastic heat equation \eqref{eq:SBM} associated to super-Brownian motion, since the diffusion coefficient is not Lipschitz continuous,  it is not clear if we can use the approaches in \cite{HNV2018, CKNP2, CKNP_c} to study the CLT for the solution. 
In the context of  super-Brownian motion, one can make use of the 
Laplace functional to study the asymptotic behaviors of the process $X_t$ as $t\to \infty$ (see for example \cite{Isc86, Liz99}).
In fact, we will also appeal to  the 
Laplace functional of super-Brownian to prove the central limit theorem for the spatial average of the solution to \eqref{eq:SBM}.

More details on the Laplace functional of super-Brownian motion will be presented in Section \ref{section2}.
 In Sections \ref{finite-dim} and \ref{tightness}, we will establish the convergence of finite dimensional distributions and the tightness respectively. Finally, we prove Theorem \ref{th:main} in Section \ref{th:proof}.

\section{Preliminaries}\label{section2}

We write $\langle \nu\,, f \rangle:= \int_\R f(x)\nu(\d x)$ and $\langle g\,, f \rangle:=\int_\R g(x)f(x)\d x$ for a measure $\nu$ and functions $f$ and $g$. Denote $\mathcal{M}_b(\R)^+$ the collection of nonnegative, bounded and measurable functions.
Recall that the one dimensional super-Brownian motion $X_t(\d x)$ is a measure-valued branching Markov process such that
\begin{align}\label{Laplace}
\E_{X_0}\left[\e^{-\langle X_t\,, f\rangle }\right]= \e^{-\langle X_0\,, V_tf\rangle}, \quad \text{for $f \in \mathcal{M}_b(\R)^+$}.
\end{align}
where the notation $\E_{X_0}$ denotes the super-Brownian motion starting from a finite measure $X_0$,
and the function $(t, x)\mapsto V_t(f)(x)$ is the unique locally bounded and nonnegative solution to the following nonlinear partial differential equation
\begin{align}\label{nonlinearPDE}
\begin{cases}
 {\partial_t}V_t(f)(x)= \frac12{\partial_x^2} V_t(f)(x) -\frac12 [V_t(f)(x)]^2,\\
V_0(f)=f.
\end{cases}
\end{align}
The solution to the above nonlinear PDE satisfies the following integral equation
\begin{align}\label{eq:integral}
V_t(f)(x)= P_tf(x)-\frac12\int_0^tP_{t-s}[V_s(f)]^2(x) \d s,
\end{align}
where  $P_tf(x):=\int_\R\bm{p}_t(x-y)f(y)\d y$ with $\bm{p}_t(x):=(2\pi t)^{-1/2}\e^{-x^2/(2t)}$ for $t>0$ and $x\in \R$.
Furthermore, 
according to \cite[p.54]{Li11} (see also \cite[Lemma 2.1]{Eth00}), we have the following formulas for the  moments of super-Brownian motion: for  $f\in \mathcal{M}_b(\R)^+$,
          \begin{align}
          \E_{X_0}\left[\langle X_t\,, f\rangle\right]&=\langle X_0\,, P_tf\rangle, \label{f:expectation}\\
          \E_{	X_0}\left[\langle X_t\,, f\rangle^2\right]&=
          \langle X_0\,, P_tf\rangle^2+
          \int_0^t\langle X_0\,, P_{t-s}(P_s f)^2\rangle \d s.  
          \label{f:secondmoment}
          \end{align}
We refer to \cite{Daw93, Per02, Eth00, Mue09, Li11} for more information on super-Brownian motion. 

The above facts on the Laplace functional and moments are also true for super-Brownian motion starting from Lebesgue measure  (denoted by $\lambda$); 
see, for example, \cite[Theorem 1.1]{Isc86} and \cite[Theorem 1.4]{KoS88}.  In fact, we can decompose the Lebesgue measure as $\lambda=\sum_{i=1}^{\infty}\lambda_i$, where $\lambda_i$'s are finite
measures on $\R$. By the branching property of super-Brownian motion, the process $X_t$ starting from $\lambda$ is the sum of independent copies of the process starting from $\lambda_i$.
Since each of the summand process satisfies the identities \eqref{Laplace}, \eqref{f:expectation} and \eqref{f:secondmoment}, by independence, it implies that $X_t$ starting from $\lambda$ also
satisfies theses properties. Denote $L_b^1(\R)^+ = L^1(\R)\cap \mathcal{M}_b(\R)^+$. Then, we have  for  $f \in L_b^1(\R)^+$,
          \begin{align}
          \E_{\lambda}\left[\langle X_t\,, f\rangle\right]&=\langle \lambda\,, f\rangle, \label{expectation}\\
          \E_{	\lambda}\left[\langle X_t\,, f\rangle^2\right]&=
          \langle \lambda\,, f\rangle^2+
          \int_0^t\langle \lambda\,, (P_s f)^2\rangle \d s.  
          \label{secondmoment}
          \end{align}
          Moreover, 
we see from \eqref{eq:integral} that for $f \in L_b^1(\R)^+$,
\begin{align}\label{inner}
\langle \lambda\,, V_t(f)\rangle = \langle \lambda\,, f\rangle - \frac12\int_0^t\int_\R [V_s(f)(y)]^2\d y \d s.
\end{align}



For $f\in L^1(\R)$, we denote $\hat{f}$ the Fourier transform of $f$, that is, 
\begin{align*}
\hat{f}(x)= \int_\R f(y)\e^{ixy}\d y, \quad x\in \R.
\end{align*}
Moreover, we introduce \text{for $N>0$ and $f \in L^1(\R)$}
\begin{align}\label{scale}
f^{(N)}(x):= \frac{1}{\sqrt{N}}f(x/N), \,\, x\in \R.
\end{align}
The following technical lemma will be used later on. 
\begin{lemma}\label{Plancherel}
          For all $t>0$, $r_1, r_2\geq 0$ and $f, g\in L^1(\R)\cap L^2(\R)$,
          \begin{align}\label{eq:asym}
          \lim_{N\to\infty}\int_0^{t}\d s\int_\R\d y\, 
          P_{s+r_1}f^{(N)}(y)P_{s+r_2}g^{(N)}(y)= t\cdot \langle f\,, g\rangle,
          \end{align}
          and 
          \begin{align}\label{eq:asym2}
          \sup_{N>0}\int_0^{t}\d s\int_\R\d y\, 
          \left[P_{s+r_1}f^{(N)}(y)\right]^2\leq t\cdot \langle f\,, f\rangle.
          \end{align}
\end{lemma}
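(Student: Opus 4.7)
The name of the lemma makes clear that the tool is Plancherel/Parseval's identity, and that is the approach I would take. With the Fourier convention $\hat f(\xi)=\int_\R f(y)\e^{i\xi y}\d y$ used in the paper, a direct change of variables gives $\widehat{f^{(N)}}(\xi)=\sqrt{N}\,\hat f(N\xi)$, while the heat semigroup satisfies $\widehat{P_tf}(\xi)=\e^{-t\xi^2/2}\hat f(\xi)$. Hence, for each fixed $s>0$, Parseval's identity and the substitution $\eta=N\xi$ yield
\begin{align*}
\int_\R P_{s+r_1}f^{(N)}(y)\,P_{s+r_2}g^{(N)}(y)\,\d y
&=\frac{N}{2\pi}\int_\R \e^{-(2s+r_1+r_2)\xi^2/2}\,\hat f(N\xi)\,\overline{\hat g(N\xi)}\,\d\xi\\
&=\frac{1}{2\pi}\int_\R \e^{-(2s+r_1+r_2)\eta^2/(2N^2)}\,\hat f(\eta)\,\overline{\hat g(\eta)}\,\d\eta.
\end{align*}
All of the $N$-dependence has now been transferred into the Gaussian weight, which tends to $1$ pointwise in $\eta$ as $N\to\infty$ and is bounded by $1$ uniformly in $s$ and $N$.

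I would then integrate in $s$ over $[0,t]$ and apply Fubini to exchange the order of integration; this is legal because $\hat f\,\overline{\hat g}\in L^1(\R)$ (Cauchy--Schwarz together with $\hat f,\hat g\in L^2(\R)$, which in turn follows from $f,g\in L^2(\R)$) and the Gaussian factor is dominated by $1$. The resulting $s$-integral equals $\int_0^t \e^{-(2s+r_1+r_2)\eta^2/(2N^2)}\,\d s$, which is bounded by $t$ and converges pointwise to $t$ as $N\to\infty$. Dominated convergence with majorant $t\,|\hat f\,\overline{\hat g}|\in L^1(\R)$ then gives
\begin{align*}
\lim_{N\to\infty}\int_0^t\d s\int_\R\d y\,P_{s+r_1}f^{(N)}(y)P_{s+r_2}g^{(N)}(y)=\frac{t}{2\pi}\int_\R \hat f(\eta)\,\overline{\hat g(\eta)}\,\d\eta=t\,\langle f,g\rangle,
\end{align*}
by one more use of Parseval, establishing \eqref{eq:asym}. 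The uniform estimate \eqref{eq:asym2} drops out of the same identity with $g=f$ and $r_1=r_2$: the Gaussian weight is at most $1$, so
\begin{align*}
\int_0^t\d s\int_\R\d y\,[P_{s+r_1}f^{(N)}(y)]^2\leq \frac{t}{2\pi}\int_\R |\hat f(\eta)|^2\,\d\eta = t\,\langle f,f\rangle
\end{align*}
uniformly in $N>0$.

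I do not anticipate any real difficulty here: the argument is essentially bookkeeping. The points that require (brief) care are (i)~keeping track of the $2\pi$ factor in Parseval's identity under the paper's Fourier convention, and (ii)~verifying the $L^1$ dominations needed to justify Fubini and dominated convergence—both of which follow from $f,g\in L^2(\R)$ via $\hat f,\hat g\in L^2(\R)$.
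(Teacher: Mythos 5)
Your proposal is correct and follows essentially the same route as the paper: both reduce the spatial integral via Plancherel/Parseval to $\frac{N}{2\pi}\int_{\R}\e^{-(2s+r_1+r_2)z^2/2}\hat f(Nz)\overline{\hat g(Nz)}\,\d z$, rescale to push the $N$-dependence into the Gaussian weight, and conclude by dominated convergence (the paper merely phrases the first step as evaluating the convolution $\bm{p}_{2s+r_1+r_2}*f^{(N)}*\widetilde{g^{(N)}}$ at $0$ rather than applying Parseval directly to the product). Your explicit verification that $\hat f\,\overline{\hat g}\in L^1(\R)$ is a welcome bit of extra care that the paper leaves implicit.
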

\begin{proof}
          By the semigroup property of heat kernel, we write
          \begin{align*}
          &\int_\R\d y\, 
          P_{s+r_1}f^{(N)}(y)P_{s+r_2}g^{(N)}(y)\\
           &\quad=       \int_\R\d y\,
          \int_\R \bm{p}_{s+r_1}(y-z_1)f^{(N)}(z_1) \d z_1          \int_\R \bm{p}_{s+r_2}(y-z_2)g^{(N)}(z_2)\d z_2\\
          &\quad=         
          \int_{\R^2} \bm{p}_{2s+r_1+r_2}(z_1-z_2)f^{(N)}(z_1)g^{(N)}(z_2)\d z_1\d z_2\\
          &\quad=\left(\bm{p}_{2s+r_1+r_2}*f^{(N)}*\widetilde{g^{(N)}}\right)(0),
          \end{align*}
          where $\tilde{g}(x):=g(-x)$ for $x\in \R$.
          Appealing to the Plancherel's identity, 
          \begin{align}\label{id:Plan}
          \left(\bm{p}_{2s+r_1+r_2}*f^{(N)}*\widetilde{g^{(N)}}\right)(0)
                    &=\frac{1}{2\pi }\int_{\R} \e^{-(s+(r_1+r_2)/2)z^2}
          \widehat{f^{(N)}}(z)\overline{\widehat{g^{(N)}}}(z)\d z\nonumber\\
          &=\frac{N}{2\pi }\int_{\R} \e^{-(s+(r_1+r_2)/2)z^2}
          \hat{f}(Nz)\overline{\hat{g}}(Nz)\d z.
          \end{align}
          Therefore, we obtain that 
                    \begin{align*}
          \int_0^{t}\d s\int_\R\d y\, 
          P_{s+r_1}f^{(N)}(y)P_{s+r_2}g^{(N)}(y) 
          &=\frac{1}{2\pi }\int_0^t\int_{\R} \e^{-(s+(r_1+r_2)/2)z^2/N^2}
          \hat{f}(z)\overline{\hat{g}}(z)\d z \d s,
          \end{align*}
          which implies \eqref{eq:asym} by dominated convergence theorem. Letting $f=g$ and $r_1=r_2$ in
           the proceeding display, we obtain 
           \eqref{eq:asym2}.
\end{proof}

Let us close this section with a brief description of some other notation of this
paper. Throughout we write ``$g_1(x)\lesssim g_2(x)$ for all $x\in \R$'' when
there exists a real number $L$ such that $g_1(x)\le Lg_2(x)$ for all $x\in \R$.
Alternatively, we might write ``$g_2(x)\gtrsim g_1(x)$ for all $x\in \R$.'' By
``$g_1(x)\asymp g_2(x)$ for all $x\in \R$'' we mean that $g_1(x)\lesssim g_2(x)$
for all $x\in \R$ and $g_2(x)\lesssim g_1(x)$ for all $x\in \R$.  We denote $\|g\|_{\infty}:=\sup_{x\in\R}|g(x)|$
for a bounded function $g$. Finally, we write $\|X\|_{k}=(\E[|X|^k])^{1/k}$ for a random variable $X$ and $k\geq 1$.

\section{Convergence of finite-dimensional distributions} \label{finite-dim}

Denote $\{B_{t}(f): t\geq 0, f\in L^2(\R)\}$ as the cylindrical Brownian motion, which is a centred Gaussian process such that
\begin{align*}
\E[B_t(f)B_s(g)] = (t\wedge s)\cdot \langle f\,, g\rangle, \quad  \text{for $t,s \geq 0$ and $f, g\in L^2(\R)$}.
\end{align*}
We have the following result on the convergence of  Laplace transform of super-Brownian motion.

\begin{proposition}\label{fddcon}
          For $0<t_m< \ldots < t_1$ and $f_1\ldots, f_m\in L_b^1(\R)^+$, 
          \begin{align}\label{eq:fdd}
          \lim_{N\to\infty}\E_{\lambda}\left[\e^{-\sum_{k=1}^m
           \left(\langle X_{t_k}\,, f^{(N)}_k\rangle-\E\left[\langle X_{t_k}\,, f^{(N)}_k\rangle\right] \right) }\right]=
           \E\left[\e^{-\sum_{k=1}^m B_{t_k}(f_k)}\right],
          \end{align}
          where the functions $f_k^{(N)}$, $k=1, \ldots, m$, are defined as in \eqref{scale}.
\end{proposition}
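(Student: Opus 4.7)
The plan is to compute the joint Laplace transform on the left of \eqref{eq:fdd} via an iterated use of the Markov property and the Laplace functional \eqref{Laplace}, and then to extract the asymptotic variance by linearising $V_s$ to $P_s$ and applying Lemma~\ref{Plancherel}. Relabel $t_{m+1}:=0$ and define recursively $W_0^{(N)}:=0$, $U_k^{(N)}:=f_k^{(N)}+W_{k-1}^{(N)}$, and $W_k^{(N)}:=V_{t_k-t_{k+1}}\bigl(U_k^{(N)}\bigr)$ for $k=1,\ldots,m$. Conditioning first on $\mathcal{F}_{t_m}$, invoking the Markov property and \eqref{Laplace}, and iterating upward through the times $t_{m-1},\ldots,t_1$ yields
\[
\E_\lambda\bigl[\e^{-\sum_{k=1}^m\<X_{t_k},f_k^{(N)}\>}\bigr]=\e^{-\<\lambda,W_m^{(N)}\>}.
\]
Since $\E[\<X_{t_k},f_k^{(N)}\>]=\<\lambda,f_k^{(N)}\>$ by \eqref{expectation} and $\E\bigl[\e^{-\sum_k B_{t_k}(f_k)}\bigr]=\exp\bigl(\tfrac12\sum_{i,j}(t_i\wedge t_j)\<f_i,f_j\>\bigr)$, the proposition reduces to showing
\[
A^{(N)}:=\sum_{k=1}^m\<\lambda,f_k^{(N)}\>-\<\lambda,W_m^{(N)}\>\xrightarrow{N\to\infty}\tfrac12\sum_{i,j=1}^m(t_i\wedge t_j)\<f_i,f_j\>.
\]

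Applying \eqref{inner} at each stage of the recursion gives $\<\lambda,W_k^{(N)}\>-\<\lambda,W_{k-1}^{(N)}\>=\<\lambda,f_k^{(N)}\>-\tfrac12\int_0^{t_k-t_{k+1}}\|V_s(U_k^{(N)})\|_2^2\,\d s$, and telescoping in $k$ produces the clean identity
\[
A^{(N)}=\tfrac12\sum_{k=1}^m\int_0^{t_k-t_{k+1}}\bigl\|V_s(U_k^{(N)})\bigr\|_2^2\,\d s.
\]

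The main obstacle is replacing the nonlinear $V_s$ in this identity by the heat semigroup $P_s$. From \eqref{eq:integral} and positivity we have $0\leq V_s(\phi)\leq P_s\phi$ pointwise for $\phi\geq 0$; combined with the scaling \eqref{scale}, which yields $\|f_k^{(N)}\|_\infty=O(N^{-1/2})$ while $\|f_k^{(N)}\|_2=\|f_k\|_2$, a short induction on $k$ gives $\|U_k^{(N)}\|_\infty=O(N^{-1/2})$ and $\|U_k^{(N)}\|_2=O(1)$ uniformly in $N\geq 1$. Plugging back into \eqref{eq:integral},
\[
\|P_s U_k^{(N)}-V_s(U_k^{(N)})\|_2\leq\tfrac12\int_0^s\|V_r(U_k^{(N)})\|_\infty\,\|V_r(U_k^{(N)})\|_2\,\d r=O(N^{-1/2})
\]
uniformly in $s\in[0,t_1]$, and one readily deduces $\|V_s(U_k^{(N)})\|_2^2=\|P_s U_k^{(N)}\|_2^2+O(N^{-1/2})$. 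The same argument shows inductively that
\[
U_k^{(N)}=\sum_{j=1}^{k}P_{t_j-t_k}f_j^{(N)}+O_{L^2}(N^{-1/2}),\qquad\text{with }P_0:=\mathrm{Id}.
\]

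Assembling these approximations, expanding the $L^2$-square, and applying Lemma~\ref{Plancherel} with $r_1:=t_i-t_k\geq 0$ and $r_2:=t_j-t_k\geq 0$ (valid because $t_1>\cdots>t_k$) gives, for each $k$,
\[
\int_0^{t_k-t_{k+1}}\|P_s U_k^{(N)}\|_2^2\,\d s\;\longrightarrow\;(t_k-t_{k+1})\sum_{i,j=1}^{k}\<f_i,f_j\>.
\]
Summing over $k$ and interchanging the order of summation via $\sum_{k=i\vee j}^{m}(t_k-t_{k+1})=t_{i\vee j}=t_i\wedge t_j$ (the last equality because $t_k$ is decreasing in $k$) identifies the limit of $A^{(N)}$ with $\tfrac12\sum_{i,j}(t_i\wedge t_j)\<f_i,f_j\>$, exactly the target variance singled out in the first paragraph.
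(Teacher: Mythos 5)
Your proposal is correct and follows essentially the same route as the paper: the Markov/Laplace recursion (your $U_k$ is the paper's $F_k$), the identity \eqref{inner} to telescope the log-Laplace functional into $\tfrac12\sum_k\int_0^{t_k-t_{k+1}}\|V_s(U_k^{(N)})\|_2^2\,\d s$, linearisation of $V_s$ to $P_s$, and Lemma \ref{Plancherel} with the final resummation $\sum_{k\ge i\vee j}(t_k-t_{k+1})=t_i\wedge t_j$. The only difference is bookkeeping: you control the linearisation error through the uniform bounds $\|U_k^{(N)}\|_\infty=O(N^{-1/2})$, $\|U_k^{(N)}\|_2=O(1)$ to get the clean expansion $U_k^{(N)}=\sum_{j\le k}P_{t_j-t_k}f_j^{(N)}+O_{L^2}(N^{-1/2})$ in one inductive step, whereas the paper expands term by term using $L^\infty$ and $L^1$ bounds; your version is a tidier presentation of the same estimate.
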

\begin{proof}
          By the Markov property and the identity \eqref{Laplace}, we write
          \begin{align}\label{Markov}
          \E_{\lambda}\left[\e^{-\sum_{k=1}^m
           \left(\langle X_{t_k}\,, f^{(N)}_k\rangle\right) }\right]=\e^{- \langle \lambda\,, V_{t_m}(F_m)\rangle}, 
          \end{align}
          where the function $F_m$ is defined as 
          \begin{align}\label{F}
          \begin{cases}
         F_1=f^{(N)}_1,\\
         F_k=f^{(N)}_k+ V_{t_{k-1}-t_k}(F_{k-1}), \quad \text{for $k=2, \ldots, m$}.
        \end{cases}
        \end{align}
        Since $V_t(f)(x) \leq P_tf(x)$ for all $x\in \R$ and nonnegative and bounded function $f$,
        \begin{align}
        \|F_k\|_{\infty}& \leq \|f_k^{(N)}\|_{\infty} + \|P_{t_{k-1}-t_k}F_{k-1}\|_{\infty}\nonumber\\
        &\leq  \|f_k^{(N)}\|_{\infty} + \|F_{k-1}\|_{\infty},\nonumber
        \end{align}
        which implies that 
        \begin{align}\label{bound1}
        \|F_k\|_{\infty} \leq \sum_{j=1}^k\|f_j^{(N)}\|_{\infty} \leq \frac{1}{\sqrt{N}}\sum_{j=1}^k\|f_j\|_{\infty}.
        \end{align}
        Moreover,
        \begin{align}
        \langle \lambda\,, F_k\rangle &\leq         \langle \lambda\,, f^{(N)}_k\rangle + 
        \langle \lambda\,, P_{t_{k-1}-t_k}F_{k-1}\rangle\nonumber\\
        &=         \langle \lambda\,, f^{(N)}_k\rangle + 
        \langle \lambda\,, F_{k-1}\rangle,\nonumber
        \end{align}
        which implies that
        \begin{align}\label{bound2}
        \langle \lambda\,, F_k\rangle &\leq \sum_{j=1}^k\langle \lambda\,, f^{(N)}_j\rangle
         = \sqrt{N}\sum_{j=1}^k\langle \lambda\,, f_j\rangle.
        \end{align}

        Set $t_{m+1}=0$.
        Using \eqref{inner} and \eqref{F},
        \begin{align}\label{iteration}
        \langle \lambda\,, V_{t_m-t_{m+1}}(F_m)\rangle&= \langle \lambda\,, F_m\rangle -
         \frac12\int_0^{t_m-t_{m+1}}\int_\R [V_s(F_m)(y)]^2\d y \d s \nonumber\\
         &=\langle\lambda\,, f_m^{(N)}\rangle +    \langle \lambda\,, V_{t_{m-1}-t_m}(F_{m-1})\rangle-
         \frac12\int_0^{t_m-t_{m+1}}\int_\R [V_s(F_m)(y)]^2\d y \d s \nonumber\\
         &\, \,\,\vdots \nonumber\\
         &=\sum_{k=1}^m\langle\lambda\,, f_k^{(N)}\rangle 
         -\sum_{k=1}^m\frac12\int_0^{t_k-t_{k+1}}\int_\R [V_s(F_k)(y)]^2\d y \d s.
        \end{align}
        Denote
        \begin{align}\label{K=I_k}
        I_k^{(N)}=\int_0^{t_k-t_{k+1}}\int_\R [V_s(F_k)(y)]^2\d y \d s.
        \end{align}
        Hence, we see from \eqref{Markov}, \eqref{iteration} and \eqref{expectation} that
        \begin{align}\label{eq:fdd2}
          \E_{\lambda}\left[\e^{-\sum_{k=1}^m
           \left(\langle X_{t_k}\,, f^{(N)}_k\rangle-\E\left[\langle X_{t_k}\,, f^{(N)}_k\rangle\right] \right) }\right]=
           \exp\left\{
           \frac12\sum_{k=1}^mI_{k}^{(N)}
           \right\}.
          \end{align}

          We next estimate $I_{k}^{(N)}$. We apply the formula \eqref{eq:integral} to write
          \begin{align}
          I_{k}^{(N)} &=\int_0^{t_k-t_{k+1}}\d s\int_\R\d y\, \left[
          P_sF_k(y) - \frac12 \int_0^sP_{s-r}[V_rF_k]^2(y)\d r
          \right]^2\nonumber\\
          &=I^{(N)}_{k,1}+I^{(N)}_{k,2} + I^{(N)}_{k,3},
          \end{align}
          where 
          \begin{align*}
          I^{(N)}_{k,1}&=\int_0^{t_k-t_{k+1}}\d s\int_\R\d y\, \left[
          P_sF_k(y) 
          \right]^2,\\
          I^{(N)}_{k,2}&=-\int_0^{t_k-t_{k+1}}\d s\int_\R\d y\, 
          P_sF_k(y)  \int_0^sP_{s-r}[V_rF_k]^2(y)\d r
          ,\\
          I^{(N)}_{k,3}&=\frac14\int_0^{t_k-t_{k+1}}\d s\int_\R\d y\, \left[
           \int_0^sP_{s-r}[V_rF_k]^2(y)\d r 
          \right]^2.
          \end{align*}
          By \eqref{bound1} and \eqref{bound2}, 
          \begin{align}\label{estimate:I2}
          \left|I^{(N)}_{k,2}\right| &\leq  \int_0^{t_k-t_{k+1}}\d s\int_\R\d y\, 
          P_sF_k(y)  \int_0^s\|V_rF_k\|_{\infty}^2\d r\nonumber\\
          &\leq  \int_0^{t_k-t_{k+1}}\d s\int_\R\d y\, 
          P_sF_k(y)  \int_0^s\|F_k\|_{\infty}^2\d r
           =\frac{(t_k-t_{k+1})^2}{2}\|F_k\|_{\infty}^2 \langle \lambda\,, F_k\rangle\nonumber\\
           & \leq \frac{1}{\sqrt{N}} \frac{(t_k-t_{k+1})^2}{2}\sum_{j=1}^{k}\langle \lambda\,, f_j\rangle 
           \left(\sum_{j=1}^{k}\|f_j\|_{\infty}\right)^2.
          \end{align}
          Similarly, 
          \begin{align}\label{estimate:I3}
          I^{(N)}_{k,3} &\leq\frac{t_k-t_{k+1}}4 \|F_k\|_{\infty}^3\int_0^{t_k-t_{k+1}}\d s\int_\R\d y\, 
           \int_0^sP_{s-r}V_rF_k(y)\d r \nonumber\\
           &\leq\frac{t_k-t_{k+1}}4  \|F_k\|_{\infty}^3\int_0^{t_k-t_{k+1}}\d s\int_\R\d y\, 
           \int_0^sP_{s-r}P_rF_k(y)\d r \nonumber\\
           &= \|F_k\|_{\infty}^3\frac{(t_k-t_{k+1})^3}{8}\langle \lambda\,, F_k\rangle \nonumber\\
           & \leq \frac{1}{N} \frac{(t_k-t_{k+1})^3}{8}\sum_{j=1}^{k}\langle \lambda\,, f_j\rangle 
           \left(\sum_{j=1}^{k}\|f_j\|_{\infty}\right)^3.
          \end{align}
          The estimates in \eqref{estimate:I2} and \eqref{estimate:I3} yield that as $N\to\infty$,
          \begin{align}\label{1}
          I_{k}^{(N)} =           I^{(N)}_{k,1} + o(1).
          \end{align}
          Furthermore, appealing to the identity \eqref{F},
          \begin{align}\label{2}
          I^{(N)}_{k,1}&= \int_0^{t_k-t_{k+1}}\d s\int_\R\d y\, \left[
          P_sf^{(N)}_k(y) + P_sV_{t_{k-1}-t_{k}}(F_{k-1})(y)
          \right]^2\nonumber\\
          &=          I^{(N)}_{k,1,1}+           I^{(N)}_{k,1,2}+           I^{(N)}_{k,1,3},
          \end{align}
          where
          \begin{align*}
          I^{(N)}_{k,1,1}&=\int_0^{t_k-t_{k+1}}\d s\int_\R\d y\, \left[
          P_sf^{(N)}_k(y)
          \right]^2,\\
          I^{(N)}_{k,1,2}&=2\int_0^{t_k-t_{k+1}}\d s\int_\R\d y\, 
          P_sf^{(N)}_k(y) P_sV_{t_{k-1}-t_{k}}(F_{k-1})(y),\\
         I^{(N)}_{k,1,3}&=\int_0^{t_k-t_{k+1}}\d s\int_\R\d y\, \left[
           P_sV_{t_{k-1}-t_{k}}(F_{k-1})(y) 
          \right]^2.
          \end{align*}
          By Lemma \ref{Plancherel}, as $N\to\infty$,
          \begin{align}\label{3}
          I^{(N)}_{k,1,1} = (t_k-t_{k+1})\cdot \langle f_k\,, f_k\rangle + o(1).
          \end{align}
         Moreover, we use \eqref{eq:integral} to write
         \begin{align}
         I^{(N)}_{k,1,2}&=2\int_0^{t_k-t_{k+1}}\d s\int_\R\d y\, 
          P_sf^{(N)}_k(y) P_sP_{t_{k-1}-t_{k}}(F_{k-1})(y)\nonumber\\
          & \quad -\int_0^{t_k-t_{k+1}}\d s\int_\R\d y\, P_sf^{(N)}_k(y) \int_0^{t_{k-1}-t_k}
          P_sP_{t_{k-1}-t_k-r}[V_{r}(F_{k-1})]^2(y)\d r \nonumber\\
          &=2\int_0^{t_k-t_{k+1}}\d s\int_\R\d y\, 
          P_sf^{(N)}_k(y) P_{t_{k-1}-t_{k}+s}(F_{k-1})(y) + o(1), \quad \text{as $N\to\infty$},\nonumber
         \end{align}
         where the second equality holds by \eqref{bound1} and \eqref{bound2}.
         Using the formula \eqref{F}, we have as $N\to\infty$
         \begin{align}
         I^{(N)}_{k,1,2}&=2\int_0^{t_k-t_{k+1}}\d s\int_\R\d y\, 
          P_sf^{(N)}_k(y) P_{t_{k-1}-t_{k}+s}(f^{(N)}_{k-1})(y) \nonumber\\
          &\quad + 2\int_0^{t_k-t_{k+1}}\d s\int_\R\d y\, 
          P_sf^{(N)}_k(y) P_{t_{k-1}-t_{k}+s}(V_{t_{k-2}-t_{k-1}}(F_{k-2}))(y) + o(1)\nonumber\\
          &= 2(t_k-t_{k+1})\cdot \langle f_k\,, f_{k-1}\rangle + 2\int_0^{t_k-t_{k+1}}\d s\int_\R\d y\, 
          P_sf^{(N)}_k(y) P_{t_{k-1}-t_{k}+s}(V_{t_{k-2}-t_{k-1}}(F_{k-2}))(y)\nonumber\\
          & \quad + o(1),\nonumber
         \end{align}
         thanks to Lemma \ref{Plancherel}.
         Since the estimates in \eqref{bound1} and \eqref{bound2} ensure that
          the dominated term of $V_{t_{k-2}-t_{k-1}}(F_{k-2})$ is  $P_{t_{k-2}-t_{k-1}}(F_{k-2})$, 
           we can repeat the proceeding 
         argument and apply Lemma \ref{Plancherel} to 
         conclude that as $N\to\infty$,
         \begin{align}\label{4}
         I^{(N)}_{k,1,2}&=2(t_k-t_{k+1})\sum_{j=1}^{k-1} \langle f_k\,, f_{j}\rangle + o(1).
         \end{align}

         As for $I_{k,1,3}^{(N)}$, we can use the same argument in \eqref{estimate:I2} and \eqref{estimate:I3} to see 
         that as $N\to\infty$, 
         \begin{align}\label{5}
         I_{k,1,3}^{(N)} =\int_0^{t_k-t_{k+1}}\d s\int_\R\d y\, \left[
           P_{t_{k-1}-t_{k}+s}F_{k-1}(y) 
          \right]^2 + o(1).
         \end{align}
         Therefore, we combine \eqref{1}-\eqref{4} to obtain that as $N\to\infty$, 
         \begin{align}\label{sum-iteration}
         I_{k}^{(N)}&= (t_k-t_{k+1}) \left(\langle f_k\,, f_k\rangle +
         2\sum_{i=1}^{k-1}\langle f_k\,, f_i\rangle \right)+ o(1)\nonumber\\
         &\quad + \int_0^{t_k-t_{k+1}}\d s\int_\R\d y\, \left[
           P_{t_{k-1}-t_{k}+s}F_{k-1}(y) 
          \right]^2.
         \end{align}
         The estimate of the above double integral is similar to that of $I_{k,1}^{(N)}$, 
         with $F_k$ replaced by $F_{k-1}$. We proceed as follows. 
         Using again the formula \eqref{F}, we obtain
         \begin{align}
         &\int_0^{t_k-t_{k+1}}\d s\int_\R\d y\, \left[
           P_{t_{k-1}-t_{k}+s}F_{k-1}(y) 
          \right]^2 \nonumber\\
          &\quad =\int_0^{t_k-t_{k+1}}\d s\int_\R\d y\, \left[
           P_{t_{k-1}-t_{k}+s}f^{(N)}_{k-1}(y) 
           + P_{t_{k-1}-t_{k}+s}V_{t_{k-2}-t_{k-1}}(F_{k-2})(y)
          \right]^2\nonumber\\
          &\quad = J^{(N)}_{k, 1} +J^{(N)}_{k, 2}+ J^{(N)}_{k, 3},
         \end{align}
         where
         \begin{align}\label{J123}
         J^{(N)}_{k, 1}&=\int_0^{t_k-t_{k+1}}\d s\int_\R\d y\, \left[
           P_{t_{k-1}-t_{k}+s}f^{(N)}_{k-1}(y) 
          \right]^2\nonumber\\
          &=(t_k-t_{k+1})\cdot \langle f_{k-1}\,, f_{k-1}\rangle + o(1),\\
                  J^{(N)}_{k, 2}&=2\int_0^{t_k-t_{k+1}}\d s\int_\R\d y\, 
           P_{t_{k-1}-t_{k}+s}f^{(N)}_{k-1}(y) 
           P_{t_{k-1}-t_{k}+s}V_{t_{k-2}-t_{k-1}}(F_{k-2})(y)\nonumber\\
           &=2\int_0^{t_k-t_{k+1}}\d s\int_\R\d y\, 
           P_{t_{k-1}-t_{k}+s}f^{(N)}_{k-1}(y) 
           P_{t_{k-1}-t_{k}+s}P_{t_{k-2}-t_{k-1}}F_{k-2}(y) + o(1),\nonumber\\
           J^{(N)}_{k, 3}&=\int_0^{t_k-t_{k+1}}\d s\int_\R\d y\, \left[
            P_{t_{k-1}-t_{k}+s}V_{t_{k-2}-t_{k-1}}(F_{k-2})(y)
          \right]^2\nonumber\\
          &=\int_0^{t_k-t_{k+1}}\d s\int_\R\d y\, \left[
            P_{t_{k-1}-t_{k}+s}P_{t_{k-2}-t_{k-1}}F_{k-2}(y)
          \right]^2 +o(1) \nonumber\\
          &=\int_0^{t_k-t_{k+1}}\d s\int_\R\d y\, \left[
            P_{t_{k-2}-t_{k}+s}F_{k-2}(y)
          \right]^2 +o(1), \label{J3}
         \end{align}
         as $N\to\infty$ by Lemma \ref{Plancherel} and \eqref{bound1}, \eqref{bound2}.
         Similar to the estimate of $I_{k,1,2}^{(N)}$ in \eqref{4}, we have as $N\to\infty$,
         \begin{align}\label{J2}
         J_{k,2}^{(N)}= 2(t_{k}-t_{k+1}) \sum_{i=1}^{k-2}\langle f_{k-1}\,, f_{i}\rangle + o(1).
         \end{align}
         Now we see from \eqref{sum-iteration}-\eqref{J2} that as $N\to\infty$
                  \begin{align*}
         I_{k}^{(N)}&= (t_k-t_{k+1})\sum_{j=k-1}^k \left(\langle f_j\,, f_j\rangle +
         2\sum_{i=1}^{j-1}\langle f_j\,, f_i\rangle \right)+ o(1)\nonumber\\
         &\quad + \int_0^{t_k-t_{k+1}}\d s\int_\R\d y\, \left[
            P_{t_{k-2}-t_{k}+s}F_{k-2}(y)
          \right]^2.
         \end{align*}
         We can repeat the above argument to conclude that as $N\to\infty$,
         \begin{align*}
         I_{k}^{(N)}&= (t_k-t_{k+1})\sum_{j=1}^k \left(\langle f_j\,, f_j\rangle +
         2\sum_{i=1}^{j-1}\langle f_j\,, f_i\rangle \right)+ o(1).
                  \end{align*}
         Therefore, as $N\to\infty$,
         \begin{align}
         \sum_{k=1}^mI_{k}^{(N)}&= \sum_{k=1}^m(t_k-t_{k+1})\sum_{j=1}^k \left(\langle f_j\,, f_j\rangle +
         2\sum_{i=1}^{j-1}\langle f_j\,, f_i\rangle \right) + o(1)\nonumber\\
         &=\sum_{j=1}^m \left(\langle f_j\,, f_j\rangle +
         2\sum_{i=1}^{j-1}\langle f_j\,, f_i\rangle \right)\sum_{k=j}^m(t_k-t_{k+1}) + o(1)\nonumber\\
         &=\sum_{j=1}^m t_j\left(\langle f_j\,, f_j\rangle +
         2\sum_{i=1}^{j-1}\langle f_j\,, f_i\rangle \right) +o(1),
         \end{align}
         which together with \eqref{eq:fdd2} implies \eqref{eq:fdd}. The proof is complete. 
 \end{proof}

\begin{remark}\label{higherdimension}
The result in Proposition \ref{fddcon}  also holds for $d$-dimensional super-Brownian motion starting from Lebesgue measure on $\R^d$, with   the scaled function in \eqref{scale} defined as $f^{(N)}(x)=N^{-d/2}f(x/N), x\in \R^d$ ($f\in L^1_b(\R^d)$) and the limit process in Proposition \ref{fddcon} replaced by the cylindrical Brownian motion  
$\{B_t(f): t\geq 0, f\in L^2(\R^d)\}$ on $\R^d$. The proof follows along the same lines as in Proposition \ref{fddcon} using an analogue of Lemma  \ref{Plancherel}. 
\end{remark}

\section{Tightness} \label{tightness}

We first recall from   \cite[Lemma 2.5]{KoS88} that 
           for $t>0$ and  $\phi\in C_c^{\infty}(\R)$, almost surely,  
          \begin{align}\label{mild}
          \langle X_t\,, \phi\rangle - \langle \lambda\,, P_t\phi\rangle= \int_0^t\int_\R \sqrt{u(r\,, z)}P_{t-r}\phi(z) W(\d r\, \d z).
          \end{align}
Using stochastic Fubini's theorem, it implies that for all $t>0$ and $x\in \R$, almost surely, 
\begin{align}\label{eq:mild}
          u(t\,, x)= 1+ \int_0^t\int_\R \bm{p}_{t-s}(x-y) \sqrt{u(s\,, y)}\  W(\d s\, \d y).
          \end{align}

\begin{lemma}\label{lem:holder}
          Let $u(0, \cdot)\equiv 1$. For all $T \geq 0$ and $k\geq 2$, there exists a constant $C>0$ such that for all $(t, x), (s, y)\in [0, T]\times \R$
          \begin{align}\label{eq:holder}
          \E\left[|u(t\,, x)-u(s\,, y)|^{k}\right] \leq C \left(|t-s|^{k/4} + |x-y|^{k/2}\right).
          \end{align}
          As a consequence, the solution to \eqref{eq:SBM} subject to $u(0, \cdot)\equiv 1$ admits a continuous version on $\R_+ \times \R$.
\end{lemma}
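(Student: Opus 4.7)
The plan is to combine the mild formulation \eqref{eq:mild} with the Burkholder--Davis--Gundy (BDG) inequality and standard heat-kernel estimates, then invoke the Kolmogorov continuity theorem for the continuous modification. Writing $u(t,x)=1+J(t,x)$ with
\[
J(t,x):=\int_0^t\!\int_\R \bm{p}_{t-r}(x-z)\sqrt{u(r,z)}\,W(\d r\,\d z),
\]
I split $u(t,x)-u(s,y)=[J(t,x)-J(t,y)]+[J(t,y)-J(s,y)]$ (without loss $s\le t$) and estimate each piece by BDG.

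The essential preliminary ingredient is the uniform moment bound
\[
h_p(T):=\sup_{0\le r\le T,\,z\in\R}\|u(r,z)\|_p<\infty\quad\text{for every }p\ge 1.
\]
Spatial stationarity of $u(r,\cdot)$, inherited from the translation invariance of the Lebesgue initial data and of the white noise, reduces this to bounding $\|u(r,0)\|_p$. Applying BDG to \eqref{eq:mild} followed by Minkowski's integral inequality (valid since $k/2\ge 1$) yields
\[
\|u(r,0)\|_k\le 1+C_k\left(\int_0^r \bm{p}_{2(r-s)}(0)\,\|u(s,0)\|_{k/2}\,\d s\right)^{1/2},
\]
and a doubling induction on $k=2,4,8,\dots$, starting from the trivial $\|u(r,0)\|_1=1$ afforded by \eqref{expectation} applied to an approximate delta, closes the recursion on every dyadic exponent; Lyapunov's inequality then fills in the intermediate values.

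Granted the moment bound, the spatial piece is handled by BDG and Minkowski:
\[
\E[|J(t,x)-J(t,y)|^k]\le C_k\,h_{k/2}(T)^{k/2}\left(\int_0^t\!\int_\R[\bm{p}_{t-r}(x-z)-\bm{p}_{t-r}(y-z)]^2\,\d z\,\d r\right)^{k/2},
\]
and a Plancherel computation in the spirit of \eqref{id:Plan} reduces the inner integral to $2[\bm{p}_{2(t-r)}(0)-\bm{p}_{2(t-r)}(x-y)]$, whose $\d r$-integral is $\lesssim |x-y|$ by a scaling change of variable; raised to $k/2$ this produces the $|x-y|^{k/2}$ term. The temporal piece splits as $\int_0^s+\int_s^t$: the near-diagonal part gives $\int_s^t \bm{p}_{2(t-r)}(0)\,\d r\lesssim \sqrt{t-s}$, while the far part reduces by Plancherel to an elementary bound of the same order $\sqrt{t-s}$; raising to $k/2$ produces $|t-s|^{k/4}$. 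Summing the two contributions yields \eqref{eq:holder}, and the Kolmogorov continuity theorem then delivers the continuous modification. The hardest part is really the moment bound itself: because the diffusion coefficient $\sqrt{u}$ is only $\tfrac12$-H\"older, a direct linear Gr\"onwall on the $k$-th moment does not close, and one must exploit the doubling-induction structure above (alternatively, one could differentiate the Laplace functional \eqref{Laplace} in $\theta$ at $\theta=0$ against a smooth approximation $\rho_\epsilon^x$ of $\delta_x$ to extract higher moments directly).
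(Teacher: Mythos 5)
Your proof is correct and follows the same overall skeleton as the paper's (mild form, Burkholder/BDG plus Minkowski, a uniform moment bound, heat-kernel increment estimates, then Kolmogorov's continuity theorem), but you make self-contained two ingredients that the paper simply outsources to the literature, and this is where the genuine difference lies. For the uniform bound $\sup_{r\le T,\,z}\|u(r,z)\|_k<\infty$ the paper invokes stationarity together with \cite[Lemma 2.7]{KoS88}, whereas you re-derive it; your observation that the square-root diffusion coefficient turns the BDG estimate for the $k$-th moment into one involving only the $(k/2)$-th moment, so that a doubling induction $1\to2\to4\to\cdots$ closes where a linear Gr\"onwall would not, is accurate and is arguably the most instructive point of your write-up. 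Likewise, for the kernel increments the paper cites \cite[Proposition 5.2]{ChD14}, while you compute $\int_\R[\bm{p}_a(x-z)-\bm{p}_a(y-z)]^2\,\d z=2[\bm{p}_{2a}(0)-\bm{p}_{2a}(x-y)]$ and the analogous temporal quantity via Plancherel and scaling; these computations are correct and are exactly the estimates hiding behind the citation (indeed the paper itself performs the temporal one explicitly later, in \eqref{lambdaG}--\eqref{timebound2}). The only structural difference is the decomposition of the increment: you split as $[J(t,x)-J(t,y)]+[J(t,y)-J(s,y)]$, while the paper keeps the joint increment and splits the time integral over $[s,t]$ and $[0,s]$ with the kernel difference $\bm{p}_{t-r}(x-\cdot)-\bm{p}_{s-r}(y-\cdot)$; both yield the same bound $|t-s|^{k/4}+|x-y|^{k/2}$. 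What your route buys is independence from the two external references at the cost of a longer argument; what the paper's route buys is brevity.
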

\begin{proof}
          We first notice from the mild form \eqref{eq:mild} that for all $t>0$, $\{u(t\,, x): x\in \R\}$ is a stationary process, which together with \cite[Lemma 2.7]{KoS88} implies that 
          \begin{align}\label{momentbound}
          \sup_{t\in [0, T] \times \R}\E[u(t\,,x)^k] <\infty.
          \end{align}
          Without loss of generality, we assume $t\geq s\geq 0$. Then by Burkholder's inequality,
          \begin{align}\label{momentsum}
          \E\left[|u(t\,, x)-u(s\,, y)|^{k}\right] &\lesssim \E\left[\left|\int_s^t\int_\R \bm{p}_{t-r}(x-z)\sqrt{u(r\,, z)}\, W(\d r\, \d z)\right|^k\right] \nonumber\\
          & \quad+ \E\left[\left|\int_0^s\int_\R \left(\bm{p}_{t-r}(x-z)- \bm{p}_{s-r}(y-z)\right)\sqrt{u(r\,, z)}\, W(\d r\, \d z)\right|^k\right] \nonumber\\
          & \lesssim  \E\left[\left|\int_s^t\int_\R \bm{p}^2_{t-r}(x-z)u(r\,, z)\, \d z\d r\right|^{k/2}\right] \nonumber\\
          &\quad + \E\left[\left|\int_0^s \int_\R\left(\bm{p}_{t-r}(x-z)- \bm{p}_{s-r}(y-z)\right)^2u(r\,, z)\, \d z\d r\right|^{k/2}\right] 
          \end{align}
          Using Minkowski's inequality and \eqref{momentbound}, 
          \begin{align}\label{sum1}
          \E\left[\left|\int_s^t \int_\R\bm{p}^2_{t-r}(x-z)u(r\,, z)\, \d z\d r\right|^{k/2}\right] &\leq \left(\int_s^t\int_\R \bm{p}^2_{t-r}(x-z)\|u(r\,, z)\|_{k/2}\, \d z\d r\right)^{k/2}\nonumber\\
          & \lesssim \left(\int_s^t\int_\R \bm{p}^2_{t-r}(x-z)\d z\d r\right)^{k/2} \asymp (t-s)^{k/4},
          \end{align}
          thanks to the semigroup property of heat kernel.
          Similarly, 
          \begin{align}\label{sum2}
          &\E\left[\left|\int_0^s \int_\R\left(\bm{p}_{t-r}(x-z)- \bm{p}_{s-r}(y-z)\right)^2u(r\,, z)\, \d z\d r\right|^{k/2}\right]\nonumber\\
          & \quad \lesssim \left(\int_0^s \int_\R\left(\bm{p}_{t-r}(x-z)- \bm{p}_{s-r}(y-z)\right)^2\|u(r\,, z)\|_{k/2}\, \d z\d r\right)^{k/2} \nonumber\\
          & \quad \lesssim  \left(\int_0^s \int_\R\left(\bm{p}_{t-r}(x-z)- \bm{p}_{s-r}(y-z)\right)^2\d z\d r\right)^{k/2} \nonumber\\
          & \quad \lesssim |t-s|^{k/4} + |x-y|^{k/2},
          \end{align}
          where the last inequality follows from \cite[Proposition 5.2]{ChD14}.  Therefore, we combine \eqref{sum1} and \eqref{sum2} to obtain \eqref{eq:holder}.
          Finally, by Kolmogorov continuity theorem, $\{u(t\,, x): (t, x)\in \R_+\times \R\}$ has a continuous version on  $\R_+ \times \R$.
\end{proof}

Recall that for the indicate function $\bm{1}_{[0, x]}$ ($x\geq 0$), the function $\bm{1}^{(N)}_{[0, x]}$is defined as in \eqref{scale}.                
For $N\geq 1$ and $(t, x)\in [0,1]^2$, we introduce
\begin{align}\label{V_N}
V_N(t\,, x)&:= \frac1{\sqrt{N}}\int_0^{xN}[u(t\,, z) -1]\d z \nonumber\\
&= \langle X_t\,, \bm{1}_{[0,\, x]}^{(N)}\rangle - \E\left[\langle X_t\,, \bm{1}_{[0,\, x]}^{(N)}\rangle\right]\nonumber\\
&=\int_0^t\int_\R \sqrt{u(r\,, z)} P_{t-r}\bm{1}_{[0,\, x]}^{(N)}(z)W(\d r\, \d z),
\end{align}
where the last equality holds by \eqref{eq:mild} and stochastic Fubini's theorem. 
Moreover, it is clear from Lemma \ref{lem:holder} that for each $N\geq1$, the process $\{V_N(t\,,x): (t,x)\in [0, 1]^2\}$ is continuous on $[0, 1]^2$.

\begin{proposition}\label{prop:holder}
          There exists a constant $C>0$ such that for all $N\geq 1$ and $(t, x)$, $(s, y)\in [0, 1]^2$,
          \begin{align}\label{eq:holder}
          \E_{\lambda}\left[\left| V_N(t\,,x)+ V_N(s\,,y)- V_N(t\,, y)-V_N(s\,,x)
          \right|^4 \right] \leq C\left(|t-s| \times |x-y|\right)^{5/4}.
          \end{align}
\end{proposition}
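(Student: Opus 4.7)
The plan is to prove the four-point moment bound via Burkholder-Davis-Gundy plus direct heat-kernel estimates. Without loss of generality I assume $0\le s\le t$ and $0\le y\le x$, and set $h_N(z) := \bm{1}^{(N)}_{[0,x]}(z)-\bm{1}^{(N)}_{[0,y]}(z) = \frac{1}{\sqrt N}\bm{1}_{(yN,\,xN]}(z)$, noting $\|h_N\|_2^2 = x-y$. From the stochastic integral representation \eqref{V_N},
\begin{align*}
\Delta &:= V_N(t,x)+V_N(s,y)-V_N(t,y)-V_N(s,x)\\
&= \int_s^t\!\!\int_\R \sqrt{u(r,z)}\,P_{t-r}h_N(z)\,W(\d r\,\d z)
+ \int_0^s\!\!\int_\R \sqrt{u(r,z)}\,(P_{t-r}-P_{s-r})h_N(z)\,W(\d r\,\d z)\\
&=: I_1+I_2.
\end{align*}

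Next, applying Burkholder-Davis-Gundy and then Minkowski's inequality, together with the uniform bound $\sup_{r\le 1,\,z\in\R}\|u(r,z)\|_2<\infty$ (which follows from \eqref{momentbound}), reduces the problem to the two deterministic estimates
\[
\E[|I_1|^4]\lesssim A_1^2,\qquad \E[|I_2|^4]\lesssim A_2^2,
\]
with $A_1 := \int_s^t\|P_{t-r}h_N\|_2^2\,\d r$ and $A_2 := \int_0^s\|(P_{t-r}-P_{s-r})h_N\|_2^2\,\d r$. For $A_1$, $L^2$-contraction of the heat semigroup gives $A_1\le(t-s)\|h_N\|_2^2=(t-s)(x-y)$.

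For $A_2$ I would appeal to Plancherel via the explicit Fourier transform $|\widehat{h_N}(z)|^2 = 4\sin^2(z(x-y)N/2)/(Nz^2)$; performing the $r$-integration first yields
\[
A_2 = \frac{2}{\pi N}\int_\R\frac{(1-e^{-sz^2})(1-e^{-(t-s)z^2/2})^2\sin^2(z(x-y)N/2)}{z^4}\,\d z.
\]
Bounding $(1-e^{-(t-s)z^2/2})^2\le (t-s)z^2/2$ and $1-e^{-sz^2}\le 1$, then using $\int_\R\sin^2(az)/z^2\,\d z=\pi|a|$, gives $A_2\le(t-s)(x-y)/2$.

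Putting everything together yields $\E[|\Delta|^4]\le C(t-s)^2(x-y)^2$; since $(t-s)(x-y)\le 1$ on $[0,1]^2$, this is dominated by $C((t-s)(x-y))^{5/4}$, which is the desired bound. The main obstacle is $A_2$: the naive estimate $\|(P_{t-r}-P_{s-r})h_N\|_2\le 2\|h_N\|_2$ would miss the crucial $(t-s)$-decay, and the Fourier computation is needed to extract the product structure $(t-s)(x-y)$ simultaneously, through the combined use of $(1-e^{-a})^2\le a$ and the $\sin^2/z^2$ identity.
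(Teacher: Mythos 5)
Your proof is correct, and in the key step it takes a genuinely different (and sharper) route than the paper. Both arguments start from the same decomposition $\Delta=I_1+I_2$ and both use Burkholder; the difference is in how the time integral of the quadratic variation of $I_2$ is handled. The paper bounds $\E_\lambda[|\int_0^s\langle X_r,G\rangle\,\d r|^2]$ by Cauchy--Schwarz in $r$, which puts the square \emph{inside} the $\d r$-integral, and then invokes the exact second-moment formula \eqref{secondmoment} for $\langle X_r,G\rangle$; the resulting quantity $s\int_0^s\langle\lambda,G\rangle^2\,\d r$ no longer lets one integrate $\e^{-rz^2}$ in $r$ to gain the factor $z^{-2}$ before squaring, which forces the case analysis $|x-y|\lessgtr|t-s|^{1/2}$ and only yields the exponent $5/4$. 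You instead apply Minkowski's integral inequality to $\|\langle I_2\rangle\|_2$ together with the uniform bound $\sup_{r\le 1,z}\|u(r,z)\|_2<\infty$ from \eqref{momentbound}, which keeps the square \emph{outside}: $\E[|I_2|^4]\lesssim\bigl(\int_0^s\|(P_{t-r}-P_{s-r})h_N\|_2^2\,\d r\bigr)^2=A_2^2$. Performing the $r$-integration first inside the Plancherel representation then cleanly produces the factor $(1-\e^{-sz^2})/z^2$, and the combination $(1-\e^{-a})^2\le a$ with $\int_\R\sin^2(bz)z^{-2}\,\d z=\pi|b|$ gives $A_2\le(t-s)(x-y)/2$; your Fourier computation agrees exactly with the paper's \eqref{lambdaG}. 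The payoff is the bound $\E[|\Delta|^4]\lesssim(|t-s|\,|x-y|)^2$, which dominates the stated $(|t-s|\,|x-y|)^{5/4}$ on $[0,1]^2$ and in fact confirms the conjecture in the paper's remark that the exponent $5/4$ can be improved to $2$. The only ingredient you use that the paper's proof of this proposition does not is the uniform moment bound \eqref{momentbound}, but that is already established in Lemma \ref{lem:holder}, so nothing is circular.
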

\begin{proof}
                     Assume $t\geq s$ and $x\geq y$ without loss of generality.
          According to \eqref{V_N}, we write
          \begin{align*}
           &V_N(t\,,x)+ V_N(s\,,y)- V_N(t\,, y)-V_N(s\,,x)\\
           &\quad =\int_s^t\int_\R \sqrt{u(r\,, z)} P_{t-r}\bm{1}_{[y,\, x]}^{(N)}(z)W(\d r\, \d z)\\
           & \quad \quad +
             \int_0^s\int_\R\sqrt{u(r\,, z)} \left(P_{t-r}\bm{1}_{[y,\, x]}^{(N)}- P_{s-r}\bm{1}_{[y,\, x]}^{(N)}\right)(z)W(\d r\, \d z).
          \end{align*}
          Hence, 
          \begin{align}\label{A+B}
          &\E_{\lambda}\left[\left| V_N(t\,,x)+ V_N(s\,,y)- V_N(t\,, y)-V_N(s\,,x)
          \right|^4\right]\leq 8(\mathcal{A}+ \mathcal{B}),
          \end{align}
          where
          \begin{align*}
          \mathcal{A}&=\E_{\lambda}\left[\left|\int_s^t\int_\R \sqrt{u(r\,, z)} P_{t-r}\bm{1}_{[y,\, x]}^{(N)}(z)W(\d r\, \d z)\right|^4\right],\\
          \mathcal{B}&=\E_{\lambda}\left[\left|\int_0^s\int_\R\sqrt{u(r\,, z)} \left(P_{t-r}\bm{1}_{[y\,, x]}^{(N)}- P_{s-r}\bm{1}_{[y\,, x]}^{(N)}
          \right)(z)W(\d r\, \d z)
          \right|^4\right].
          \end{align*}
          Applying the Burkholder's inequality and the Cauchy-Schwartz inequality, we have 
          \begin{align}\label{Abound}
          \mathcal{A} &\leq 4 \E_{\lambda}\left[\left| \int_s^t \left\langle X_r\,, \left[P_{t-r}\bm{1}_{[y,\, x]}^{(N)}\right]^2\right
          \rangle\d r\right|^2
           \right]\nonumber\\
           &\leq 4(t-s) \int_s^t  \E_{\lambda}\left[\left\langle X_r\,, \left[P_{t-r}\bm{1}_{[y,\, x]}^{(N)}\right]^2\right\rangle^2 \right]\d r
           \nonumber\\
           &= 4(t-s) \int_s^t \d r\, \left(
                       \left\langle \lambda\,, \left[P_{t-r}\bm{1}_{[y,\, x]}^{(N)}\right]^2\right\rangle^2+
           \int_0^r\left\langle \lambda\,, \left(P_\theta\left[P_{t-r}\bm{1}_{[y,\, x]}^{(N)}\right]^2\right)^2\right\rangle\d \theta
           \right),
          \end{align}
          where the equality follows from \eqref{secondmoment}.
          The formula \eqref{id:Plan} ensures that
          \begin{align}\label{x-y}
           \left\langle \lambda\,, \left[P_{t-r}\bm{1}_{[y,\, x]}^{(N)}\right]^2\right\rangle
           &=\frac{N}{2\pi}\int_\R \e^{-(t-r)z^2}\left|\widehat{\bm{1}_{[y\,, x]}}(Nz)\right|^2\d z\nonumber\\
           &\leq \frac{N}{2\pi}\int_\R \left|\widehat{\bm{1}_{[y\,, x]}}(Nz)\right|^2\d z = \|\bm{1}_{[y\,, x]}\|^2_{L^2(\R)}=|x-y|,
          \end{align}
          which implies that
          \begin{align}\label{A2}
          (t-s)\int_s^t\d r\,            \left\langle \lambda\,, \left[P_{t-r}\bm{1}_{[y,\, x]}^{(N)}\right]^2\right\rangle^2
           \leq |t-s|^2|x-y|^2.
          \end{align}
          Denote $F=\left[P_{t-r}\bm{1}_{[y,\, x]}^{(N)}\right]^2$.
          By the Plancherel's identity (see the calculation in  \eqref{id:Plan}), 
          \begin{align*}
          \left\langle \lambda\,, \left(P_\theta\left[P_{t-r}\bm{1}_{[y,\, x]}^{(N)}\right]^2\right)^2\right\rangle
          &=   \left\langle \lambda\,, \left(P_\theta F\right)^2\right\rangle
          =\frac{1}{2\pi}\int_\R\e^{-\theta z^2}|\hat{F}(z)|^2\d z.
          \end{align*}
          Note that for all $z\in \R$
          \begin{align*}
          |\hat{F}(z)|\leq \langle \lambda\,, F\rangle &=\left\langle \lambda\,,
           \left[P_{t-r}\bm{1}_{[y,\, x]}^{(N)}\right]^2\right\rangle \leq |x-y|,
          \end{align*}
          where the last inequality holds by \eqref{x-y}. The proceeding yields that
          \begin{align}\label{A1}
          &(t-s)\int_s^t \d r\, 
           \int_0^r\left\langle \lambda\,, \left(P_\theta\left[P_{t-r}\bm{1}_{[y,\, x]}^{(N)}\right]^2\right)^2\right\rangle\d \theta
           \nonumber\\
           &\quad \qquad\qquad\qquad\qquad\leq \frac{(t-s)|x-y|^2}{2\pi}\int_s^t\d r\, \int_0^r\d \theta\, \int_\R\e^{-\theta z^2}\d z\nonumber\\
           &\quad\qquad\qquad\qquad\qquad \asymp  (t-s)|x-y|^2\int_s^t \sqrt{r}\d r  \lesssim |t-s|^2|x-y|^2.
          \end{align}
          Hence, we conclude from \eqref{Abound},  \eqref{A2} and \eqref{A1} that 
          \begin{align}\label{Abound2}
          \mathcal{A}\lesssim |t-s|^2|x-y|^2. 
          \end{align}

          We proceed to estimate $\mathcal{B}$. Denote 
          \begin{align}\label{G}
          G=\left(P_{t-s+r}\bm{1}_{[y\,, x]}^{(N)}- P_{r}\bm{1}_{[y\,, x]}^{(N)}
          \right)^2.
          \end{align}
          Again, applying the Burkholder's inequality and the Cauchy-Schwartz inequality,
          \begin{align}\label{B}
          \mathcal{B}&\leq 4 \E_{\lambda}\left[\left|\int_0^s \langle X_r\,, G\rangle \d r\right|^2\right]
           \leq 4s\int_0^s \E_{\lambda}\left[ \langle X_r\,, G\rangle^2\right]\d r \nonumber\\
           &=4s\int_0^s \d r\, \left( \langle \lambda\,, G\rangle^2 +\int_0^r \langle \lambda \,, (P_\theta G)^2\rangle \d \theta
             \right)\nonumber\\
            &= \mathcal{B}_1 + \mathcal{B}_2,
          \end{align}
          where the first equality holds by \eqref{secondmoment} and 
          \begin{align*}
          \mathcal{B}_1&=4s\int_0^s \langle \lambda\,, G\rangle^2\d r\\
          \mathcal{B}_2&=4s\int_0^s \d r\, \int_0^r \langle \lambda \,, (P_\theta G)^2\rangle \d \theta.
          \end{align*}
          We first estimate $\mathcal{B}_1$. According to \eqref{id:Plan},
          \begin{align}\label{lambdaG}
          \langle \lambda\,, G\rangle&=\left \langle \lambda\,,
           \left(P_{t-s+r}\bm{1}_{[y\,, x]}^{(N)} 
          \right)^2\right\rangle
          +\left \langle \lambda\,,
           \left( P_{r}\bm{1}_{[y\,, x]}^{(N)} 
          \right)^2\right\rangle
          -2\left \langle \lambda\,,
           P_{t-s+r}\bm{1}_{[y\,, x]}^{(N)}P_{r}\bm{1}_{[y\,, x]}^{(N)} 
          \right\rangle\nonumber\\
          &=\frac{N}{2\pi}\int_\R\left[
          \e^{-(t-s+r)z^2}+\e^{-rz^2}-2\e^{-((t-s)/2+r)z^2}\right]\left|\widehat{\bm{1}_{[y\,, x]}}(Nz)\right|^2\d z\nonumber\\
          &=\frac{N}{2\pi}\int_\R\e^{-rz^2}\left(1-
          \e^{-(t-s)z^2/2}\right)^2\left|\widehat{\bm{1}_{[y\,, x]}}(Nz)\right|^2\d z,
          \end{align}
          which implies that
          \begin{align}\label{lambdaGbound}
          \langle \lambda\,, G\rangle&\leq \frac{N}{2\pi}\int_\R\left|\widehat{\bm{1}_{[y\,, x]}}(Nz)\right|^2\d z=\|\bm{1}_{[y\,, x]}\|^2_{L^2(\R)}= |x-y|.
          \end{align}
          \textbf{Case 1}: $|x-y|\leq |t-s|^{1/2}$. In this case, we use \eqref{lambdaGbound}, \eqref{lambdaG}
           and the inequality 
          $1-\e^{-a}\leq a$ for all $a\geq 0$ to see that
          \begin{align}\label{B2case1}
          \mathcal{B}_1&\leq 4s|x-y|\int_0^s\langle \lambda\,, G\rangle \d r \nonumber\\
          & \leq \frac{4s|x-y|N}{4\pi}\int_0^s\d r\int_\R\e^{-rz^2} (t-s)z^2
          \left|\widehat{\bm{1}_{[y\,, x]}}(Nz)\right|^2\d z\nonumber\\
          & \leq \frac{4|t-s||x-y|N}{4\pi}\int_\R
          \left|\widehat{\bm{1}_{[y\,, x]}}(Nz)\right|^2\d z =2|t-s||x-y|^2 \leq 2|t-s|^{3/2}|x-y|^{3/2}.
          \end{align}
          \textbf{Case 2}: $|t-s|^{1/2}<|x-y|$.
          We observe that
          \begin{align}\label{fourier}
          \left|\widehat{\bm{1}_{[y\,, x]}}(a)\right|^2=\frac{2(1-\cos((x-y)a))}{a^2},\quad \text{for all $a\in\R$},
          \end{align}
          We appeal to  \eqref{lambdaGbound}, \eqref{lambdaG}, \eqref{fourier}
           and the inequality 
          $1-\e^{-a}\leq 1\wedge a$ for all $a\geq 0$
          to see that
          \begin{align}\label{B2case2}
          \mathcal{B}_1&\leq 4s|x-y|\int_0^s\langle \lambda\,, G\rangle \d r \nonumber\\
          &\leq  \frac{4|x-y|N}{2\pi} \int_{\R}\frac{(1-\e^{-sz^2})}{z^2}\left( 1\wedge [|t-s|^2z^4]\right)
          \frac{2(1-\cos((x-y)Nz)}{(Nz)^2}\d z\nonumber\\
          &\leq 4|x-y|\int_\R\frac{1\wedge [|t-s|^2z^4]}{z^4}\d z = \frac{32}3 |t-s|^{3/2}|x-y| \leq 
           \frac{32}3 |t-s|^{5/4}|x-y|^{5/4}.
          \end{align}
          We conclude from \eqref{B2case1} and \eqref{B2case2} that in both cases we have 
          \begin{align}\label{B2bound}
          \mathcal{B}_1\lesssim |t-s|^{5/4}|x-y|^{5/4}.
          \end{align}

          We next estimate $\mathcal{B}_2$. Recall the function $G$ defined in \eqref{G}. We apply the Plancherel's
          identity to see that
          \begin{align*}
          \langle \lambda\,, (P_\theta G)^2\rangle &= \frac{1}{2\pi}\int_\R\e^{-\theta z^2}|\hat{G}(z)|^2\d z\nonumber\\
          &\leq \frac{|\hat{G}(0)|^2}{2\pi}\int_\R\e^{-\theta z^2}\d z
          \asymp \theta^{-1/2}\langle \lambda\,, G\rangle^2 .
          \end{align*}
          Hence,
          \begin{align}\label{B1bound}
          \mathcal{B}_2&\lesssim 4s\int_0^s\sqrt{r}\langle\lambda\,, G\rangle^2 \d r \leq 4s\int_0^s
          \langle\lambda\,, G\rangle^2 \d r\nonumber\\
          &=\mathcal{B}_1\lesssim |t-s|^{5/4}|x-y|^{5/4},
          \end{align}
          where the last inequality follows from \eqref{B2bound}. 
          
          Therefore, the estimate \eqref{eq:holder} follows from \eqref{A+B}, \eqref{Abound2}, \eqref{B},
          \eqref{B2bound} and \eqref{B1bound}.
\end{proof}

\begin{remark}
          The exponent $\frac54$ (greater than 1) in \eqref{eq:holder} will be sufficient for tightness. It might not be optimal and
          one may expect this exponent can be replaced by $2$, as in \eqref{Abound2}.
\end{remark}

\begin{proposition}\label{margin:holder}
          For any $k\geq2$, there exists $C>0$ such that for all $(t, s, x, y)\in [0, 1]^4$ and $N\geq1$
          \begin{align}
          &\E\left[|V_N(t\,, x)-V_N(t\,,y)|^k\right] \leq C |x-y|^{k/2}, \label{space:holder}\\
          &\E\left[|V_N(t\,, x)-V_N(s\,,x)|^k\right] \leq C |t-s|^{k/2}. \label{time:holder}
          \end{align}
\end{proposition}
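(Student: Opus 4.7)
The plan is to exploit the Walsh stochastic integral representation \eqref{V_N},
\begin{align*}
V_N(t,x)=\int_0^t\int_\R\sqrt{u(r,z)}\,P_{t-r}\bm{1}_{[0,x]}^{(N)}(z)\,W(\d r\,\d z),
\end{align*}
and reduce each of \eqref{space:holder} and \eqref{time:holder} to a deterministic $L^2$-estimate on the kernel. For any difference of interest of the form $\Delta=\int\sqrt{u}\,\Phi\,\d W$, the Burkholder--Davis--Gundy inequality gives $\E[|\Delta|^k]\lesssim \E[(\int u\Phi^2)^{k/2}]$; Minkowski's inequality, legitimate because $k\ge 2$, together with the uniform bound $\sup_{(r,z)\in[0,T]\times\R}\|u(r,z)\|_{k/2}<\infty$ provided by \eqref{momentbound}, then yields $\E[|\Delta|^k]\lesssim(\int\Phi^2)^{k/2}$. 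All subsequent work is purely analytic.

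For the spatial bound \eqref{space:holder}, assume $x\ge y$ and take $\Phi(r,z)=\bm{1}_{[0,t]}(r)\,P_{t-r}\bm{1}_{[y,x]}^{(N)}(z)$. A change of variables $r\mapsto t-r$ turns $\int_0^t\int_\R\Phi^2$ into the quantity governed by \eqref{eq:asym2} of Lemma \ref{Plancherel} with $r_1=0$ and $f=\bm{1}_{[y,x]}$, giving the uniform bound $t\cdot|x-y|\le|x-y|$ and hence \eqref{space:holder}.

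For the time bound \eqref{time:holder}, assume $t\ge s$ and decompose $V_N(t,x)-V_N(s,x)=A+B$, where
\begin{align*}
A=\int_s^t\!\!\int_\R\sqrt{u(r,z)}\,P_{t-r}\bm{1}_{[0,x]}^{(N)}(z)\,W(\d r\,\d z),\quad
B=\int_0^s\!\!\int_\R\sqrt{u(r,z)}\,\bigl[P_{t-r}-P_{s-r}\bigr]\bm{1}_{[0,x]}^{(N)}(z)\,W(\d r\,\d z).
\end{align*}
The term $A$ is handled exactly as in the spatial case via \eqref{eq:asym2} (after a time shift), yielding $\E[|A|^k]\lesssim\bigl((t-s)x\bigr)^{k/2}\lesssim (t-s)^{k/2}$. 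For $B$, the Plancherel computation \eqref{id:Plan} from the proof of Lemma \ref{Plancherel}, applied exactly as in \eqref{lambdaG}, gives
\begin{align*}
\int_0^s\!\!\int_\R\bigl[(P_{t-r}-P_{s-r})\bm{1}_{[0,x]}^{(N)}\bigr]^2\d z\,\d r=\frac{N}{2\pi}\int_\R\frac{1-\e^{-sz^2}}{z^2}\bigl(1-\e^{-(t-s)z^2/2}\bigr)^2\bigl|\widehat{\bm{1}_{[0,x]}}(Nz)\bigr|^2\d z.
\end{align*}
Using the elementary bounds $(1-\e^{-sz^2})/z^2\le z^{-2}$, $(1-\e^{-a})^2\le 1\wedge a^2$, and $|\widehat{\bm{1}_{[0,x]}}(w)|^2\le 4w^{-2}$, a rescaling $w=z|t-s|^{1/2}$ controls the right-hand side by a constant times $(t-s)^{3/2}/N\le (t-s)$, since $(t,s)\in[0,1]^2$. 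This yields $\E[|B|^k]\lesssim (t-s)^{k/2}$ and hence \eqref{time:holder}.

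The only non-routine step is the analysis of $B$: replacing $|\widehat{\bm{1}_{[0,x]}}|^2$ by $\|\bm{1}_{[0,x]}\|_{\infty}^2$ or $\|\bm{1}_{[0,x]}\|_{L^1}^2=x^2$ would either fail to converge or introduce unwanted $x$-dependence, and the correct rate only emerges by exploiting the decay $|\widehat{\bm{1}_{[0,x]}}(w)|^2\lesssim w^{-2}$ in Fourier space, which absorbs the factor $N$ coming from the scaling \eqref{scale} and produces the $(t-s)^{3/2}$ bound used above.
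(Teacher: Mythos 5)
Your proof is correct and follows essentially the same route as the paper: Burkholder's inequality, Minkowski's inequality and the uniform moment bound \eqref{momentbound} reduce both increments to deterministic kernel estimates, which are then controlled by \eqref{eq:asym2} for the spatial increment and the term $A$, and by the Plancherel computation of \eqref{lambdaG} for the term $B$. The only (harmless) deviation is your final bound for $B$: the paper simply uses $1-\e^{-sz^2}\le 1$ and $(1-\e^{-(t-s)z^2/2})^2\le (t-s)z^2/2$ together with $\frac{N}{2\pi}\int_\R|\widehat{\bm{1}_{[0,x]}}(Nz)|^2\,\d z=x\le 1$ to get the bound $(t-s)x/2$, so your closing remark that the pointwise Fourier decay $|\widehat{\bm{1}_{[0,x]}}(w)|^2\lesssim w^{-2}$ is indispensable is overstated (what fails is only the cruder bound by $\|\widehat{\bm{1}_{[0,x]}}\|_\infty^2$), though your own computation yielding $(t-s)^{3/2}/N$ is valid and in fact slightly sharper.
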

\begin{proof}
          We first prove \eqref{space:holder} and assume without loss generality $y\leq x$. According to \eqref{V_N} and Burkholder's inequality,
          \begin{align}
          \E\left[|V_N(t\,, x)-V_N(t\,,y)|^k\right] &= \E\left[\left|\int_0^t\int_\R \sqrt{u(r\,, z)} P_{t-r}\bm{1}_{[y,\, x]}^{(N)}(z)W(\d r\, \d z)\right|^k\right]\nonumber\\
          &\lesssim  \E\left[\left|\int_0^t\int_\R u(r\,, z) P^2_{t-r}\bm{1}_{[y,\, x]}^{(N)}(z)\d z\d r\right|^{k/2}\right]\nonumber\\
          &\leq  \left|\int_0^t\int_\R \|u(r\,, z)\|_{k/2} P^2_{t-r}\bm{1}_{[y,\, x]}^{(N)}(z)\d z\d r\right|^{k/2}\nonumber\\
          &\lesssim  \left|\int_0^t\int_\R P^2_{r}\bm{1}_{[y,\, x]}^{(N)}(z)\d z\d r\right|^{k/2}, \label{spacebound}
          \end{align}
           where the second inequality holds by Minkowski's inequality and the third by \eqref{momentbound}.  Now we apply \eqref{eq:asym2} to see that 
           for all $N\geq1$ and $(t, x, y)\in [0, 1]^3$
           \begin{align*}
           \int_0^t\int_\R P^2_{r}\bm{1}_{[y,\, x]}^{(N)}(z)\d z\d r \leq t\, \|\bm{1}_{[y\,, x]}\|^2_{L^2(\R)} \leq |x-y|,
           \end{align*}
           which together with \eqref{spacebound} yields \eqref{space:holder}.
           
           We proceed to prove \eqref{time:holder} and assume without loss generality $s\leq t$. We appeal to Burkholder's inequality and Minkowski's inequality again to see that
           \begin{align}\label{timebound}
           \E\left[|V_N(t\,, x)-V_N(s\,,x)|^k\right] & \lesssim \E\left[\left|\int_s^t\int_\R \sqrt{u(r\,, z)} P_{t-r}\bm{1}_{[0,\, x]}^{(N)}(z)W(\d r\, \d z)\right|^k\right] \nonumber\\
           & \quad + \E\left[\left|\int_0^s\int_\R \sqrt{u(r\,, z)} \left(P_{t-r}\bm{1}_{[0,\, x]}^{(N)}(z)- P_{t-r}\bm{1}_{[0,\, x]}^{(N)}(z)\right)W(\d r\, \d z)\right|^k\right] \nonumber\\
            & \lesssim \left|\int_s^t\int_\R \|u(r\,, z)\|_{k/2} P^2_{t-r}\bm{1}_{[0,\, x]}^{(N)}(z)\d z\d r\right|^{k/2} \nonumber\\
           & \quad + \left|\int_0^s\int_\R  \|u(r\,, z)\|_{k/2}  \left(P_{t-r}\bm{1}_{[0,\, x]}^{(N)}(z)- P_{t-r}\bm{1}_{[0,\, x]}^{(N)}(z)\right)^2\d z\d r\right|^{k/2} \nonumber\\
                       & \lesssim \left|\int_0^{t-s}\int_\R  P^2_{r}\bm{1}_{[0,\, x]}^{(N)}(z)\d z\d r\right|^{k/2} \nonumber\\
           & \quad + \left|\int_0^s\int_\R\left(P_{t-s+r}\bm{1}_{[0,\, x]}^{(N)}(z)- P_{r}\bm{1}_{[0,\, x]}^{(N)}(z)\right)^2\d z\d r\right|^{k/2},
           \end{align}
           thanks to \eqref{momentbound}. The estimate \eqref{eq:asym2} ensures that for all $N\geq 1$ and $(t, s, x)\in [0, 1]^3$
           \begin{align}\label{timebound1}
           \int_0^{t-s}\int_\R  P^2_{r}\bm{1}_{[0,\, x]}^{(N)}(z)\d z\d r \leq (t-s) \|\bm{1}_{[0, x]}\|^2_{L^2(\R)}\leq (t-s).
           \end{align}
           Moreover, from the calculation in \eqref{lambdaG} (with $y=0$), we have for all $(t, s, x)\in [0, 1]^3$
           \begin{align}\label{timebound2}
           &\int_0^s\int_\R\left(P_{t-s+r}\bm{1}_{[0,\, x]}^{(N)}(z)- P_{r}\bm{1}_{[0,\, x]}^{(N)}(z)\right)^2\d z\d r \nonumber\\
           &\quad =\frac{N}{2\pi}\int_\R \frac{1-\e^{-sz^2}}{z^2}\left(1-
          \e^{-(t-s)z^2/2}\right)^2\left|\widehat{\bm{1}_{[0\,, x]}}(Nz)\right|^2\d z \nonumber\\
          &\quad \leq \frac{(t-s)N}{4\pi}\int_\R\left|\widehat{\bm{1}_{[0\,, x]}}(Nz)\right|^2\d z = \frac{(t-s)x}{2} \leq (t-s),
                     \end{align}
                     where in the first inequality, we have used that $1-\e^{-a} \leq 1 \wedge a$ for all $a\geq 0$. Therefore, the estimate \eqref{time:holder} follows from \eqref{timebound}, \eqref{timebound1}
                     and \eqref{timebound2}. The proof is complete. 
\end{proof}

\section{Proof of Theorem \ref{th:main}}\label{th:proof}

\begin{proof}[Proof of Theorem \ref{th:main}]
Choose $0<t_m<\ldots < t_1\leq 1$ and $x_1, \ldots, x_m \in [0, 1]$. 
Recall the random variables $V_N(t_k\,, x_k), k=1, \ldots, m$ as defined in \eqref{V_N}.
By \eqref{secondmoment} and \eqref{eq:asym2}, 
\begin{align*}
\sup_{1\leq k\leq m}\sup_{N> 0}\E_{\lambda}\left[V_N(t_k\,, x_k)^2\right]<\infty,
\end{align*}
which implies that the family of random vectors $(V_N(t_1\,, x_1), \ldots,  V_N(t_m\,, x_m))_{N>0}$ are tight on $\R^m$.  Hence, for any sequence $(n_j)_{j=1}^{\infty}$ ($n_j\to\infty$ as $j\to\infty$), there exists a subsequence $(n'_{j})_{j=1}^{\infty}$ and a random vector $Y=(Y_1, \ldots, Y_m)$ such that as $j\to\infty$,
\begin{align*}
(V_{n'_j}(t_1\,, x_1), \ldots, V_{n'_j}(t_m\,, x_m)) \to (Y_1, \ldots, Y_m) \quad \text{in distribution}.
\end{align*}
Proposition \ref{fddcon} ensures that for all $\theta_1 \geq 0, \ldots, \theta_m\geq 0$, 
\begin{align*}
\lim_{j\to\infty}\E\left[\e^{-(\theta_1V_{n'_j}(t_1\,, x_1)+ \ldots + \theta_mV_{n'_j}(t_m\,, x_m))}\right]= \E\left[\e^{-(\theta_1W(t_1, x_1)+ \ldots + \theta_mW(t_m, x_m))}\right],
\end{align*}
where $W$ denotes the Brownian sheet. We can apply the same arguments as in \cite[p.106-107]{Isc86} to see that  for all $\theta_1 \geq 0, \ldots, \theta_m\geq 0$
\begin{align*}
 \E\left[\e^{-(\theta_1Y_1+ \ldots + \theta_mY_m)}\right]= \E\left[\e^{-(\theta_1W(t_1, x_1)+ \ldots + \theta_mW(t_m, x_m))}\right].
\end{align*}
By the analytic continuation theorem,  the random vector $(Y_1, \ldots, Y_m)$ has the same distribution as $(W(t_1, x_1), \ldots, W(t_m, x_m))$.  Therefore, we conclude that as $N\to\infty$, 
\begin{align*}
(V_{N}(t_1\,, x_1), \ldots, V_{N}(t_m\,, x_m)) \to (W(t_1, x_1), \ldots, W(t_m, x_m)) \quad \text{in distribution}.
\end{align*}
which implies that the finite-dimensional distributions of  $\left\{N^{-1/2}\int_0^{xN}[u(t\,, z)- 1]\, \d z\right\}_{(t, x)\in [0, 1]^2}$
converge to those of Brownian sheet. The tightness follows from Propositions \ref{prop:holder} and \ref{margin:holder}
 (see \cite[Theorem 3]{BiW71}, \cite[Theorem 1]{Jol88} or \cite[Exercise 1.4.19]{Kun90}).  The proof is complete. 
\end{proof}

\vskip1cm
\begin{small}
\noindent\textbf{Zenghu Li} and \textbf{Fei Pu}
Laboratory of Mathematics and Complex Systems,
School of Mathematical Sciences, Beijing Normal University, 100875, Beijing, China
 \\
Emails: \texttt{lizh@bnu.edu.cn} and \texttt{fei.pu@bnu.edu.cn}\\
\end{small}


\begin{thebibliography}{999}

\bibitem{BiW71}
Bickel, P. J. and Wichura, M. J.:
Convergence criteria for multiparameter stochastic processes and some applications.
{\it Ann. Math. Statist.} \textbf{42}, 1656--1670. (1971)

\bibitem{ChD14}
Chen, L. and Dalang, R. C.:
H\"older-continuity for the nonlinear stochastic heat equation with rough initial conditions. 
{\it Stoch. Partial Differ. Equ. Anal. Comput.} {\bf 2}  no. 3, 316--352. (2014)

\bibitem{CKNP2} Chen, L., Khoshnevisan, D., Nualart, D. and Pu, F.:
	Central limit theorems for parabolic
	stochastic partial differential equations.  Preprint   available
	at \url{https://arxiv.org/abs/1912.01482}. To appear in \textit{Ann. Inst. H. Poincar\'e Probab. Statist.} (2019)
 \bibitem{CKNP_c} Chen, L., Khoshnevisan, D., Nualart, D. and Pu, F. (2020). 
  Spatial ergodicity and central limit theorems for parabolic Anderson model with delta initial condition.
Preprint available
	at \url{https://arxiv.org/abs/2005.10417}.    (To appear in \textit{J. Funct. Anal.}) 



\bibitem{Daw93}
Dawson, D. A.: 
{\it Measure-valued Markov processes}. \'{E}cole d'\'{E}t\'{e} de Probabilit\'{e}s de Saint-Flour XXI--1991, 1--260,
Lecture Notes in Math., {\bf1541}, Springer, Berlin, 1993.
	
	
	
\bibitem{Eth00}
Etheridge, A. M.:
{\it An introduction to superprocesses.} 
{\it University Lecture Series}, \textbf{20}. American Mathematical Society, Providence, RI. (2000)

\bibitem{HNV2018}
	Huang, J., Nualart,D. and Viitasaari, L.:
	A central limit theorem for the stochastic heat equation.
	{\it Stochastic Process. Appl.}\ {\bf 130}, no. 12, 7170--7184.  (2020)

\bibitem{Isc86}
Iscoe, I.:
A weighted occupation time for a class of measure-valued branching processes.
{\it Probab. Theory Relat. Fields} \textbf{71}, no.1, 85--116.  (1986)

\bibitem{Jol88}	
Jolis, M.:
Weak convergence to the law of the Brownian sheet.
{\it Ann. Sci. Univ. Clermont-Ferrand II Probab. Appl.} \textbf{7}, 75--82. (1988)

\bibitem{KoS88}
Konno, N. and Shiga, T.:
Stochastic partial differential equations for some measure-valued diffusions.
{\it Probab. Theory Related Fields} {\bf 79}, no. 2, 201--225.  (1988)

\bibitem{Kun90}
Kunita, H.:
{\it Stochastic flows and stochastic differential equations}.
Cambridge Studies in Advanced Mathematics, {\bf24}. Cambridge University Press, Cambridge, 1990.


\bibitem{Liz99}
Li, Z.:
Some central limit theorems for super Brownian motion.
{\it Acta Math. Sci.} (English Ed.) {\bf19}, no. 2, 121--126. (1999) 
	
\bibitem{Li11}
Li, Z.: {\it
Measure-valued branching Markov processes.}
Probability and its Applications (New York). Springer, Heidelberg. (2011) 

\bibitem{Mue09}
Mueller, C.:
Some tools and results for parabolic stochastic partial differential equations. In: A minicourse on stochastic partial differential equations, 111--144,
{\it Lecture Notes in Math.}, \textbf{1962}, Springer, Berlin. (2009)

\bibitem{Per02}
Perkins, E.:
{\it Dawson-Watanabe superprocesses and measure-valued diffusions}.
	\`Ecole d'\'et\'e de probabilit\'es de Saint-Flour, XXIX-1999, 125--324.
	In: {\it Lecture Notes in Math.}\ {\bf 1781}, Springer, Berlin. (2002)
	
 

\bibitem{Walsh}
	Walsh, J. B.: 
	{\it An Introduction to Stochastic Partial Differential Equations.}
	\`Ecole d'\'et\'e de probabilit\'es de Saint-Flour, XIV-1984, 265--439.
	In: {\it Lecture Notes in Math.}\ {\bf 1180}, Springer, Berlin.(1986)


\end{thebibliography}
\end{document}